\newtheorem{corollary}{Corollary}
\newtheorem{theorem}{Theorem}
\newtheorem{lemma}{Lemma}
\newtheorem{example}{Example}
\newtheorem{conjecture}{Conjecture}
\newtheorem{remark}{Remark}
\DeclareMathOperator{\dist}{dist}
\title{Lower Bounds for the Exponential Domination Number of $C_m \times C_n$}
\author{Chassidy Bozeman$^1$ \and Joshua Carlson$^1$ \and Michael Dairyko$^1$ \and Derek Young$^1$ \and Michael Young$^{1,2}$}
\date{April 22, 2016}
\begin{document}
\maketitle
\footnotetext[1]{Department of Mathematics, Iowa State University, Ames, IA 50011, USA. \{cbozeman, jmsdg7, mdairyko, ddyoung, myoung\}@iastate.edu}
\footnotetext[2]{Corresponding author.}

\begin{abstract}
\noindent
A vertex $v$ in a porous exponential dominating set assigns weight $\left(\tfrac{1}{2}\right)^{\dist(v,u)}$ to vertex $u$. A porous exponential dominating set of a graph $G$ is a subset of $V(G)$ such that every vertex in $V(G)$ has been assigned a sum weight of at least 1. In this paper the porous exponential dominating number, denoted by $\gamma_e^*(G)$, for the graph $G = C_m \times C_n$ is discussed. Anderson et. al. \cite{anderson} proved that $\frac{mn}{15.875}\le \gamma_e^*(C_m \times C_n) \le \frac{mn}{13}$ and conjectured that $\frac{mn}{13}$ is also the asymptotic lower bound. We use a linear programing approach to sharpen the lower bound to $\frac{mn}{13.7619 + \epsilon(m,n)}$.
\\*[2mm]
\noindent

\textbf{Keywords.} porous exponential domination, domination, grid, linear programming, mixed integer programming 
\end{abstract}

\section{Introduction}

Given a graph $G$, a \emph{weight function} of $G$ is a function $w:V(G)\times V(G)\rightarrow \mathbb{R}$. For $u,v \in V(G)$, we say that $u$ assigns weight $w(u,v)$ to $v$. For a set $S \subseteq V(G)$ we denote $$w(S,v)  := \sum_{s \in S} w(s,v).$$ Similarly, $$w(v,S)  := \sum_{s \in S} w(v,s).$$ For two weight functions of $G$, $w$ and $w'$, we say $w' \leq w$, if $w'(u,v) \leq w(u,v)$ for all $u,v \in V(G)$.

Let $D \subseteq V(G)$ and $w$ be a weight function of $G$. The pair $(D,w)$ \emph{dominates} the graph $G$, if for all $v \in V(G)$, $w(D,v) \ge 1$. 

The standard definition of domination is where $w(u,v)$ is 1 if $v$ is in the closed neighborhood of $u$, and 0 if it is not. This type of domination has been widely studied (see \cite{GPRT}, \cite{Haynes}). Another well-studied type of domination is \emph{total domination}, in which $w(u,v)$ is 1 if $v$ is in the neighborhood of $u$ and 0 if it is not (see \cite{G}, \cite{H}). There is also \emph{$k$-domination} and \emph{$k$-distance domination} (see \cite{CFHV}, \cite{HMV}). In $k$-domination $w(u,v)$ is $\frac{1}{k}$ if $v$ is in the neighborhood of $u$, 1 if $u=v$, and 0 otherwise. In $k$-distance domination $w(u,v)$ is 1 if the distance from $u$ to $v$ is at most $k$, and 0 otherwise. For \emph{exponential domination}, $w(u,v) = (\frac{1}{2})^{\dist(u,v) -1}$, where $\dist(u,v)$ represents the length of the shortest path from $u$ to $v$.

A \emph{porous exponential dominating set} of a graph $G$ is a set $D \subseteq V(G)$ such that $(D,w)$ dominates $G$ when $w(u,v) = (\frac{1}{2})^{\dist(u,v) -1}$. The \emph{exponential domination number} of $G$, denoted by $\gamma_e^*(G),$ is the cardinality of a minimum exponential dominating set. This type of exponential domination has also been referred to as \emph{porous exponential domination}. Some work has been done in \emph{non-porous exponential domination}, where $\dist(u,v)$ represents the length of the shortest path from $u$ to $v$ that does not have any internal vertices that are in the dominating set (see \cite{dankel}). For the sake of simplicity, we refer to porous exponential domination as exponential domination.

It is described in \cite{dankel} that applications of exponential domination relate to the passing of information, and specifically models how information can be spread from a speaker through a crowd. Thus, the exponential domination number represents the minimum number of speakers needed to successfully convey a message to every individual within a crowd.

Within exponential domination, there has been research on the exponential domination number of $C_m \times C_n,$ where $\times$ is the Cartesian product. Anderson et. al. \cite{anderson}, found lower and upper bounds for $\gamma_e^*(C_m \times C_n).$  The following theorem shows a sharp upper bound for the exponential domination number of the graph $C_{13m} \times C_{13n}.$

\begin{theorem}\cite{anderson} \label{13} 
For all $m$ and $n,$ $$\frac{\gamma_e^*(C_{13m} \times C_{13n})}{(13m)(13n)} \le \frac{1}{13}.$$ 
\end{theorem}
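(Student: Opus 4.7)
My plan is to exhibit an explicit dominating set $D \subset V(C_{13m} \times C_{13n})$ of size exactly $13mn$, taking $D$ to be the image modulo $(13m, 13n)$ of a carefully chosen index-$13$ sublattice $L \subset \mathbb{Z}^2$. A natural choice is $L = \langle (3,2), (-2,3)\rangle$, whose generator matrix has determinant $13$. One verifies that $(13, 0) = 3(3, 2) - 2(-2, 3)$ and $(0, 13) = 2(3, 2) + 3(-2, 3)$ both lie in $L$, so $L \supseteq (13m)\mathbb{Z} \times (13n) \mathbb{Z}$; hence $D$ is well defined on the torus and $|D| = [\mathbb{Z}^2 : (13m)\mathbb{Z} \times (13n)\mathbb{Z}]/[\mathbb{Z}^2:L] = 169mn/13 = 13mn$, as required.

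To show $(D, w)$ dominates, observe that $w(D, \bar u)$ depends only on the coset of $u$ in $\mathbb{Z}^2/L$ by translation invariance, giving $13$ cases to check. A direct enumeration shows that every coset has a representative at $L^1$-distance at most $2$ from $L$: the trivial coset has distance $0$, four cosets have distance $1$, and the remaining eight have distance $2$. The rotation $(x, y) \mapsto (-y, x)$ preserves $L$ (it sends $(3, 2) \mapsto (-2, 3)$ and $(-2, 3) \mapsto -(3,2)$), so the eight distance-$2$ cosets collapse into at most two orbits, leaving essentially two nontrivial verifications. For each representative $u$, I would lower-bound $w(D, \bar u)$ by summing $\left(\tfrac{1}{2}\right)^{d_{\mathbb{Z}^2}(u, \ell) - 1}$ over a finite set of lattice points $\ell$ near $u$ that lie in pairwise distinct torus classes; this underestimates the true torus weight because $d_{\text{torus}} \le d_{\mathbb{Z}^2}$.

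The main obstacle is the aggregation argument for the distance-$2$ representatives. At $u = (1, 1)$, the lattice neighbors $(0,0), (3,2), (1,5), (-2,3), (2,-3)$ lie at Manhattan distances $2, 3, 4, 5, 5$, contributing $\tfrac{1}{2} + \tfrac{1}{4} + \tfrac{1}{8} + \tfrac{1}{16} + \tfrac{1}{16} = 1$, and further lattice points push the total strictly above $1$. At $u = (1, 2)$, contributions of $\tfrac{1}{2}$ from $(3, 2)$, $\tfrac{1}{4}$ from $(1, 5)$, and $\tfrac{1}{4}$ from $(0, 0)$ already sum to $1$. Similar easy sums dispatch the remaining distance-$1$ cosets (a single neighbor at distance $1$ yields weight $1$) and the trivial coset (weight $2$ at $u = (0,0)$ itself). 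Care must be taken when $m$ or $n$ is very small to ensure the chosen lattice points remain distinct modulo $(13m)\mathbb{Z} \times (13n)\mathbb{Z}$, but since the points used all lie within $L^1$-radius $7$ of the origin, they already fit inside a single $13 \times 13$ fundamental domain, so no wrap-around collisions occur for any $m, n \ge 1$.
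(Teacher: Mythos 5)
Your proposal is correct and is essentially the paper's own construction made explicit: the lattice $L=\langle(3,2),(-2,3)\rangle$ meets a $13\times13$ fundamental domain in exactly one vertex per row and per column, which is precisely the periodic tiling by a $13\times13$ grid described after Theorem~\ref{13}, and your weight checks (using the fact that the radius-$2$ $L^1$-ball of $13$ points is a transversal of $\mathbb{Z}^2/L$) verify domination as required.
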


The proof of Theorem \ref{13} is constructive. An exponential dominating set is created by choosing a vertex from each row and column of a $13 \times 13$ grid and then periodically tiling $C_{13m} \times C_{13n}$ with the grid and selecting all the corresponding vertices. This argument was extended to $C_m \times C_n,$ for $m,n$ arbitrarily large.

\begin{theorem}\cite{anderson} \label{lim13}
$$ \lim_{m,n \to \infty} \frac{\gamma_e^*(C_m \times C_n)}{mn} \le \frac{1}{13}.$$ 
\end{theorem}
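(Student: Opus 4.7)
Theorem \ref{13} supplies, for any $m',n'$, an exponential dominating set of $C_{13m'}\times C_{13n'}$ of density exactly $1/13$, obtained by periodically tiling with a fixed $13\times 13$ pattern $P\subseteq\{0,\dots,12\}^2$ of size $13$. To extend this to arbitrary $m$ and $n$, the plan is to overlay the same tile $P$ on the largest sub-rectangle of $C_m\times C_n$ whose side lengths are divisible by $13$ and to promote every remaining vertex to the dominating set, so that the only extra cost is an $O(m+n)$ additive term.

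Formally, write $m=13q_1+r_1$ and $n=13q_2+r_2$ with $0\le r_1,r_2\le 12$, identify $V(C_m\times C_n)$ with $\{0,\dots,m-1\}\times\{0,\dots,n-1\}$, and let $R=\{0,\dots,13q_1-1\}\times\{0,\dots,13q_2-1\}$. Set
\[
D=\{(i,j)\in R:(i\bmod 13,\,j\bmod 13)\in P\}\,\cup\,\bigl(V(C_m\times C_n)\setminus R\bigr).
\]
The size satisfies $|D|=13q_1q_2+(mn-169q_1q_2)\le mn/13+12(m+n)$, so $|D|/mn\to 1/13$ as $m,n\to\infty$. Vertices outside $R$ lie in $D$ and are dominated trivially. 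For a vertex $v\in R$, its weight from $D$ in $C_m\times C_n$ is at least the weight it would receive, viewed as a lattice point, from the translates of $P$ lying inside $R$ in $\mathbb{Z}^2$, because cycle-distances never exceed lattice-distances; and the infinite periodic extension of $P$ dominates $\mathbb{Z}^2$ by the argument proving Theorem \ref{13}. Hence, provided the truncation loss at $v$ is small enough, $v$ receives total weight at least $1$.

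The principal obstacle is controlling this truncation loss near $\partial R$, where the contributions of translates of $P$ discarded outside $R$ can be of order $1$. To absorb the deficit, I would enlarge $D$ by a buffer consisting of every vertex of $R$ within a fixed constant distance $K$ of $\partial R$; buffer vertices lie in $D$ and are dominated trivially, while for $v\in R$ at distance greater than $K$ from $\partial R$ the missing weight is bounded by a geometric tail of the form $\sum_{k>K}(4k/13)(1/2)^{k-1}$, which can be made smaller than the slack of the infinite construction by choosing $K$ large. Since $K$ is fixed, the buffer has only $O(K(m+n))=O(m+n)$ vertices, so the asymptotic density is unchanged and the bound $\limsup_{m,n\to\infty}\gamma_e^*(C_m\times C_n)/mn\le 1/13$ follows.
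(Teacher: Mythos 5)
The paper does not actually prove this statement; it is quoted from \cite{anderson}, and the surrounding text only remarks that the periodic $13\times 13$ tiling of Theorem \ref{13} ``was extended'' to general $m,n$. Your proposal carries out exactly that extension in the natural way (tile the largest sub-rectangle with sides divisible by $13$, absorb the $O(m+n)$ leftover vertices into $D$, and add a constant-width buffer along $\partial R$), and your counting, the observation that members of $D$ are trivially dominated, and the reduction to interior vertices are all sound.

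The one step you assume rather than establish is that the infinite periodic pattern $\tilde{P}\subseteq\mathbb{Z}^2$ dominates with \emph{strictly} positive slack $\delta=\min_v w_{\mathbb{Z}^2}(\tilde{P},v)-1>0$; without $\delta>0$ the geometric-tail truncation error, however small you make it by enlarging $K$, could still push an interior vertex below weight $1$, and Theorem \ref{13} on its face only guarantees weight $\ge 1$ on the torus. Fortunately this is recoverable from Theorem \ref{13} itself: for a vertex $v$ and a dominating vertex $d$ of $C_{13m}\times C_{13n}$, the torus contribution $\left(\tfrac{1}{2}\right)^{\dist(v,d)-1}$ equals $\max_{k,l}\left(\tfrac{1}{2}\right)^{\|v-d-(13mk,13nl)\|_1-1}$, which is strictly less than the sum of these terms over all lattice representatives of $d$ (each term is positive), so $w_{\mathbb{Z}^2}(\tilde{P},v)>w_{\mathrm{torus}}(D,v)\ge 1$ for every $v$; since $w_{\mathbb{Z}^2}(\tilde{P},\cdot)$ takes only $169$ distinct values by periodicity, the minimum is attained and hence $\delta>0$. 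With that observation inserted, your argument is complete and yields $\limsup_{m,n\to\infty}\gamma_e^*(C_m\times C_n)/(mn)\le 1/13$, which is the intended reading of the theorem.
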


A lower bound can be attained in the following way: Observe that when $m$ and $n$ are large enough, for each $v \in V(C_m \times C_n)$ there exists $4i$ vertices $u \in V(C_m \times C_n)$ such that $\dist(v,u) = i$, when $i$ is a positive integer. So $w(v,V(C_m \times C_n)) \le 2+\displaystyle\sum_{i=1}^{\infty} 4i 2^{1-i} = 18$. This implies $$ \frac{1}{18} \le \frac{\gamma_e^*(G)}{mn}.$$

However this bound can be improved to $\frac{1}{17}$ by adjusting the weight function so that $v$ assigns weight 1 to itself, resulting in $w(v,V(C_m \times C_n)) \le 17$. A better lower bound was attained in \cite{anderson} by showing that the weight function could be adjusted so that each vertex assigns 2.125 less than in the original weight function.

\begin{theorem} \cite{anderson}
For all $m,n \ge 3$, $$\frac{1}{15.875} < \frac{\gamma_e^*(G)}{mn}.$$
\end{theorem}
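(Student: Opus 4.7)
The plan is to prove the bound by a weight-redistribution/double-counting argument. Let $w(u,v) = (1/2)^{\dist(u,v)-1}$ be the exponential weight, and suppose $D$ is a porous exponential dominating set of $G = C_m \times C_n$. Summing the domination condition $w(D,v) \ge 1$ over all $v \in V(G)$ and swapping the order of summation gives $mn \le \sum_{d \in D} w(d, V(G))$, so $\gamma_e^*(G) \ge mn / \max_{v} w(v, V(G))$. The naive estimate $w(v,V(G)) \le 18$ alone yields only $\gamma_e^*(G) \ge mn/18$. The goal is to construct a modified weight function $w' \le w$ such that $(D,w')$ still dominates $G$ whenever $(D,w)$ does, while $w'(v,V(G)) \le 15.875$ for every $v$.

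The first unit of savings comes from resetting $w(v,v) = 2$ to $w'(v,v) = 1$. This preserves domination: if $v \in D$ then the original $w(D,v)$ had a surplus of at least $1$ coming from $v$ itself, and if $v \notin D$ the sum $w'(D,v)$ is unchanged. This reduction alone yields $w'(v,V(G)) \le 17$ and the bound $\gamma_e^*(G) \ge mn/17$. To save the remaining $9/8 = 1.125$ per vertex, I would look for small local configurations in which the incoming weight $w(D,u)$ at some nearby $u$ is forced to carry a strict surplus above $1$, and then exploit this surplus to trim the outgoing weight of one of the dominators close to $u$. Since the contributions $1, 1/2, 1/4$ at distances $1, 2, 3$ dominate the tails of the geometric series, most of the trimming should come from these short-range contributions, where the local geometry of $C_m \times C_n$ can be enumerated in finitely many cases.

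The main obstacle is that the reductions must be defined in terms of the weight function alone, independent of which dominating set $D$ is being considered, yet be simultaneously strong enough to yield a total savings of $9/8$ per vertex and weak enough never to force some $w'(D,u)$ below $1$. This calls for careful bookkeeping so that each unit of incoming-weight surplus at a given $u$ is used to fund at most one outgoing-weight reduction at a nearby $d \in D$, with no unit of surplus double-counted across different dominators. Once such a symmetric $w'$ is produced and its feasibility checked by a case analysis on the distance from a vertex to the nearest element of $D$, applying the inequality $mn \le \sum_{d \in D} w'(d, V(G)) \le 15.875\,\gamma_e^*(G)$, and observing that the maximum $15.875$ is only attained asymptotically and not by any single vertex of a finite $C_m \times C_n$, yields the strict inequality in the theorem statement.
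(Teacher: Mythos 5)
Your overall framework is the right one and matches the strategy behind the cited result: double-count $\sum_{v} w(D,v) \ge mn$ to get $mn \le \sum_{d \in D} w(d,V(G))$, then replace $w$ by a smaller weight function $w'$ that still dominates with $D$ but whose row sums $w'(d,V(G))$ are bounded by $15.875$ rather than $18$. Your first reduction (resetting $w(v,v)$ from $2$ to $1$) is correct and recovers the $1/17$ bound, exactly as the paper notes.

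However, the entire content of the theorem beyond $1/17$ is the additional savings of $1.125$ per dominator, and that step is not actually carried out in your proposal. You describe a plan --- find local configurations where $w(D,u)$ has a forced surplus above $1$ and trim short-range contributions, with bookkeeping to avoid double-spending surplus --- but you do not exhibit the trimming rule, do not verify that it always yields at least $1.125$ of savings per dominator, and do not check that domination is preserved; you explicitly flag this as the "main obstacle" without resolving it. There is no reason given why the achievable savings is $1.125$ rather than, say, $0.5$ or $2$; that constant is precisely what a proof must produce. You also impose a requirement that is stronger than necessary and likely makes the task harder: the modified weight function need not be defined independently of $D$, nor must it satisfy $w'(v,V(G)) \le 15.875$ for \emph{every} vertex $v$ --- it only needs to satisfy this for $v \in D$, and it may be built relative to the particular dominating set at hand (this is exactly how the present paper's Lemma 1 and its recursive sequence $\{w_j\}$ operate). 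As written, the proposal is a correct reduction of the theorem to an unproved combinatorial claim, not a proof.
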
 

The above theorems led to the following conjecture.

\begin{conjecture}\cite{anderson} \label{conj} 
For all $m$ and $n,$ $$\frac{1}{13} \le \frac{\gamma_e^*(C_{m} \times C_{n})}{mn}.$$
\end{conjecture}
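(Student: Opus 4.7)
\medskip
\noindent\textbf{Proof proposal.}
The plan is to close the weight-shaving scheme used in the paper at its theoretically optimal value $13$. With $w(u,v)=(\tfrac{1}{2})^{\dist(u,v)-1}$, I would look for a torus-symmetric weight function $w'\le w$ on $V(G)\times V(G)$ (where $G=C_m\times C_n$) satisfying: (a)~$(D,w')$ dominates $G$ whenever $(D,w)$ does; and (b)~$\sum_{v\in V(G)}w'(u,v)\le 13$ for every $u\in V(G)$. Given such a $w'$, double counting over any exponential dominating set $D$ yields
\[
mn\ \le\ \sum_{v\in V(G)} w'(D,v)\ =\ \sum_{u\in D}\sum_{v\in V(G)}w'(u,v)\ \le\ 13|D|,
\]
which is exactly Conjecture~\ref{conj}. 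Every lower bound in the lineage of this problem -- the trivial $1/18$ (no shaving), the $1/17$ (reducing self-weight from $2$ to $1$), the $1/15.875$ of Anderson et al., and the $1/13.7619$ of the present paper -- is an instance of this scheme with progressively cleverer $w'$. Moreover, $13$ is the smallest outgoing sum one could hope to attain: if $D^*$ is the tight $13\times 13$ tile of Theorem~\ref{13} with $|D^*|=mn/13$, the same double count forces $\max_u\sum_v w'(u,v)\ge 13$. The conjecture thus reduces to whether this lower bound is actually achieved.

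To attempt the construction, I would formulate the search for $w'$ as a linear program on a torus orbit representative. Let $\delta(u,v)=w(u,v)-w'(u,v)\ge 0$ be shave variables, impose $\sum_{u} \delta(u,v)\ge 5 = 18-13$, and encode validity by the family of inequalities
\[
\sum_{u\in D}\delta(u,v)\ \le\ w(D,v)-1,
\]
ranging over every $v\in V(G)$ and every exponential dominating set $D$. Truncating $D$ to its local pattern $D\cap B_R(v)$ in a ball of radius $R$ around $v$, and bounding the weight from outside by an explicit tail sum $T_R$, reduces this to a finite LP -- one constraint for each dominating pattern in $B_R(v)$, up to torus symmetry. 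I would iteratively enlarge $R$ and solve with an MIP solver, seeded by $D^*$, at which all of the inequalities must be tight and the shaved weight to every vertex must equal exactly $1$.

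The main obstacle is that this pattern-by-pattern LP alone cannot shave more than $1$ (outgoing sum $17$), because without any integrality condition the uniform dual $y_v\equiv 1/17$ is feasible. The extra factor $17/13$ therefore has to be injected into the LP as side constraints valid only for integer dominating sets: forbidden-configuration constraints saying that two dominators at small mutual distance create a quantifiable surplus of weight; rigidity constraints saying that a near-optimal $D$ must look locally like $D^*$ and is consequently severely restricted; and global accounting constraints tying these local surpluses together over the torus. Identifying and proving enough such integer-specific inequalities to push the LP from $13.7619$ all the way down to $13$ is, in my view, the substantive content of the conjecture itself, and is where the proof would meet its real difficulty -- which is presumably why Anderson et al.~posed it as a conjecture and why the present paper's LP refinement stops at $13.7619$.
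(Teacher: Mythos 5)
First, a point of calibration: the statement you were asked to prove is Conjecture~\ref{conj}, which the paper does \emph{not} prove --- it is an open conjecture of Anderson et al., and the paper's own contribution (Theorem~\ref{main}) only improves the lower bound to $1/(13.7619+\epsilon)$. So there is no ``paper's proof'' to match. Your proposal is honest about this, but it is not a proof: it is a reduction of the conjecture to the existence of a shaved weight function $w'$ with outgoing sum at most $13$, followed by an explicit admission that you do not know how to construct $w'$, that the natural LP relaxation provably cannot get below outgoing sum $17$ without integrality information, and that ``identifying and proving enough such integer-specific inequalities \dots is the substantive content of the conjecture itself.'' That last sentence is correct, and it is exactly the gap. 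A proposal whose final step is ``the remaining step is the whole difficulty'' does not establish the statement.

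Two more concrete issues with the framework itself. First, you ask for a single torus-symmetric $w'$ such that $(D,w')$ dominates whenever $(D,w)$ does, \emph{for every} dominating set $D$ simultaneously. This is strictly stronger than what a lower bound requires and stronger than what the paper uses: the paper's Lemma~\ref{bound} builds a sequence of weight functions $\{w_j\}$ \emph{adapted to the particular} $D$, shaving weight only where that specific $D$ has slack. Requiring one universal $w'$ forces you to respect the worst-case slack over all dominating sets at every vertex, which is precisely why your uniform-dual obstruction at $17$ appears; the adaptive scheme evades it, which is how the paper reaches $13.7619$. Second, your observation that the tile $D^*$ of Theorem~\ref{13} forces $\max_u \sum_v w'(u,v) \ge 13$ shows that even in the best case every inequality in your scheme must be simultaneously tight, i.e., there is zero room for error --- a strong hint that no finite-radius truncation with a tail bound $T_R>0$ can ever close the argument, since the truncation error alone pushes you strictly above $13$. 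If you want to make progress along the paper's actual lines, the productive direction is the one suggested by Theorem~\ref{genbound} and Corollary~\ref{alphabound}: combine the LP bound for clustered dominators with the mixed-integer bound for isolated ones and quantify the fraction $\alpha$, rather than seeking a universal $w'$.
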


In this paper, a better lower bound for $\frac{\gamma_e^*(C_{m} \times C_{n})}{mn}$ is produced. In section \ref{sec:lp}, we use linear programming to minimize how much weight is necessary for each vertex in an exponential dominating set to assign. This leads to improved lower bounds in section \ref{sec:res}. For the remainder of the paper, we refer to $w$ as the weight function $$w(u,v) := \left(\dfrac{1}{2}\right)^{\dist(u,v) -1}.$$

\section{Linear Program} \label{sec:lp}
For the rest of the paper, let $G = C_m \times C_n$ and let $D = \{d_1, d_2, \ldots, d_{|D|} \}$ be an exponential dominating set of $G.$ Given an odd positive integer $r$, let $G_v$ be the subgraph of $G$ that is an $r \times r$ grid centered vertex $v \in V(G)$, with $V(G_v) = \{ v_1, v_2, \ldots, v_{r^2} \}$. Let $I_v$ be the set of interior vertices of $G_v$.

\indent In this section, a linear program is created where the sum of the weights assigned to the vertices in $I_{v}$ is minimized. The minimum value attained is of the form $|I_{v}| + k$, where $0<k$. A new weight function is then created, which still dominates with $D$ and has $v$ assigning $k$ less weight than before. A sequence of weight functions will be constructed recursively. \\

\begin{subsection}{The Grid}
\indent First we strategically partition $V(G)$. For each $v_i \in V(G_v)$, define $S_i$ to be the set of vertices $w \in V(G)$ such that the distance between $v_i$ and $w$ is less than the distance between $w$ and any other vertex in $G_v$. Notice that $S_i = \{v_i\}$, if $v_i \in I_v$. For $1 \le i \le r^2$, let $x_i = w(S_i, v_i)$. Therefore, if $1 \le i,j \le r^2$, then $w(S_i, v_j) = x_i \left(\tfrac{1}{2} \right) ^ {\dist(v_i,v_j)}.$ We define $\Gamma = V(G) \setminus  \bigcup_{i=1}^{r^2} S_i$ and for $1 \le j \le r^2,$ let $\epsilon_j = w(\Gamma,v_j)$. Thus, 

$$w(D,v_j) \le \displaystyle\sum_{i=1}^{r^2} w(S_i, v_j) + \epsilon_j = \displaystyle\sum_{i=1}^{r^2}x_i \left(\frac{1}{2} \right) ^ {\dist(v_i,v_j)} + \epsilon_j.$$ 

 Observe that $|V(\Gamma)| \le m+n-1$ and $\dist(\Gamma, V(G_v)) \rightarrow \infty$ as $m,n \rightarrow \infty$. Thus $0 \le \epsilon_j \le (m+n-1)\left( \frac{1}{2} \right)^{\dist(\Gamma, V(G_v)) - 1}$ for each $1 \le j \le r^2$. Therefore assuming that $\epsilon = \displaystyle\sum_{j=1}^{r^2} \epsilon_j,$
 
\[
\epsilon  \le \displaystyle\sum_{j=1}^{r^2} (m+n-1)\left( \frac{1}{2} \right)^{\dist(\Gamma, V(G_v)) - 1} \\
\le r^2(m+n-1)\left( \frac{1}{2} \right)^{\dist(\Gamma, V(G_v)) - 1},
\]
which means $\epsilon \to 0$ \text{ as } $m,n \to \infty$.

\begin{example}{\rm 
Consider $G =C_6 \times C_8$ as shown in Figure \ref{gridexample}. For the simplicity of the figure, we remove the edges of $G.$ Choose $r=3$ and construct $G_{v_5}$ with $V(G_{v_5}) = \{v_1, v_2, \ldots , v_9\}$. We then label the corresponding sets $S_1, S_2, \ldots, S_9, \Gamma.$ For instance, observe that $S_3$ consists of all vertices in $G$ whose distance to $v_3$ is smaller than their distance to any other vertex of $G_{v_5}.$
\begin{figure}[htb!]
    \centering
\includegraphics[width = 10cm]{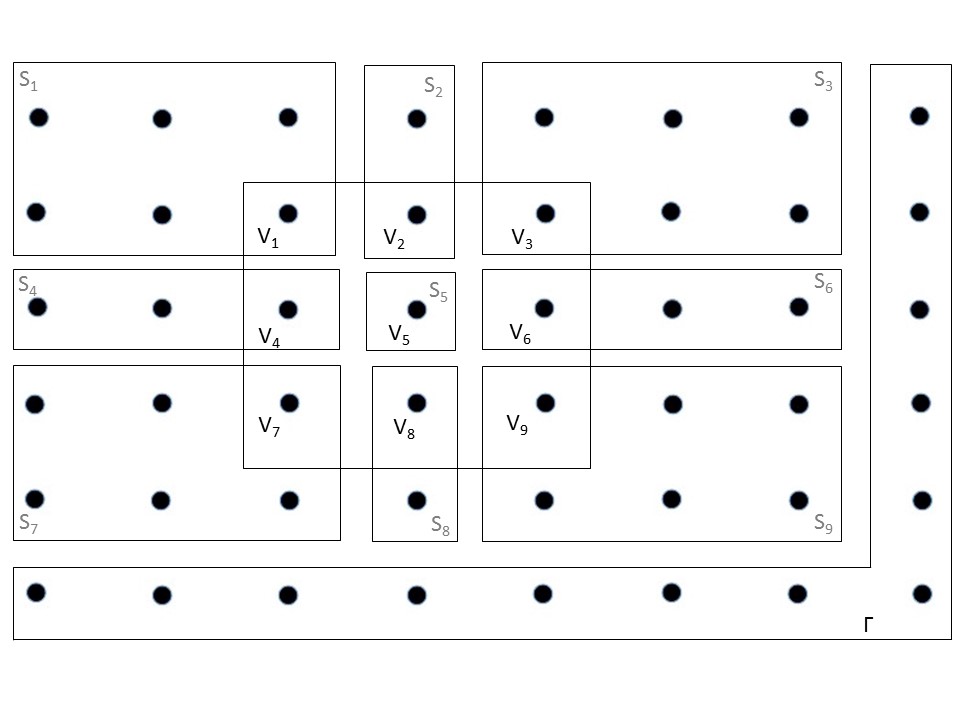}
    \caption{$C_6 \times C_8$ with edges removed}
    \label{gridexample}
\end{figure}}
\end{example}
\end{subsection}

\begin{subsection}{The Program}


Lemma \ref{bound} below proves how to get a lower bound for the exponential domination number of a graph $G$, given that each vertex in the dominating set assigns more weight to $V(G)$ than needed. 

\begin{lemma}\label{bound}
Let $D = \{d_1, d_2, \ldots, d_{|D|} \}$ be an exponential dominating set of $G$ and $\rho \in \mathbb{R}$ such that $w(d_j,V(G)) \le \rho$ for all $j$. If there exists a sequence of weight functions $\{w_j\}_{j=0}^{|D|}$, where $w = w_0$, and the following conditions are satisfied for $1 \le j \le |D|$,

\begin{enumerate}
\item [1)]
$w_j < w_{j-1}$, 
\item [2)]f
$(D,w_j)$ dominates $G$, and
\item [3)]
there exist $k \in \mathbb{R}$ such that $0 < k \le w_{j-1}(d_j,V(G)) - w_j(d_j,V(G))$,
\end{enumerate}

then $$\frac{1}{\rho-k} < \frac{|D|}{|V(G)|}.$$
\end{lemma}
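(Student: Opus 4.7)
The plan is to bound the total weight that the dominating set $D$ contributes to $V(G)$ under the final weight function $w_{|D|}$, and compare this against the total weight that domination requires. The engine of the argument is that each step of the sequence shaves at least $k$ off the contribution of one vertex of $D$, so after all $|D|$ steps every vertex of $D$ has lost at least $k$ from its initial contribution.

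First I would show that $w_{|D|}(d_j, V(G)) \le \rho - k$ for every $j \in \{1, \dots, |D|\}$. The starting bound $w_0(d_j, V(G)) = w(d_j, V(G)) \le \rho$ comes from the hypothesis on $\rho$. Monotonicity (condition~1) gives $w_{j-1}(d_j, V(G)) \le w_0(d_j, V(G)) \le \rho$, and condition~3 applied at step $j$ gives
$$w_j(d_j, V(G)) \le w_{j-1}(d_j, V(G)) - k \le \rho - k.$$
Applying monotonicity once more from step $j$ onward yields $w_{|D|}(d_j, V(G)) \le w_j(d_j, V(G)) \le \rho - k$.

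Next, I would use condition~2 at $j = |D|$: since $(D, w_{|D|})$ dominates $G$, every $v \in V(G)$ satisfies $w_{|D|}(D, v) \ge 1$. Summing over all vertices of $G$ and swapping the order of summation,
$$|V(G)| \le \sum_{v \in V(G)} w_{|D|}(D, v) = \sum_{j=1}^{|D|} w_{|D|}(d_j, V(G)) \le |D|(\rho - k).$$
Rearranging gives $\frac{1}{\rho - k} \le \frac{|D|}{|V(G)|}$, and the strict version in the statement follows because $k > 0$ forces at least one of the intermediate inequalities to be strict (for instance, $(D, w)$ already dominates $G$ strictly whenever $D$ is not a perfectly efficient dominating set, which is the typical situation).

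I do not expect any real obstacle here; the only thing to be careful about is the bookkeeping across the two uses of monotonicity (before step $j$ to get from $w_0$ to $w_{j-1}$, and after step $j$ to get from $w_j$ to $w_{|D|}$), since condition~3 is only asserted to produce a decrease of $k$ at a single step for each $d_j$. One must not mistakenly apply condition~3 multiple times to the same $d_j$, nor conflate the decrease on $d_j$ with the decreases on other vertices of $D$ at other steps.
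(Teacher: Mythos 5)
Your argument is essentially the paper's: you account, vertex by vertex, for a decrease of at least $k$ in $w(d_j,V(G))$ between $w_0$ and $w_{|D|}$, then sum the domination condition $w_{|D|}(D,v)\ge 1$ over $V(G)$ and swap the order of summation to get $|V(G)|\le \sum_j w_{|D|}(d_j,V(G)) \le |D|(\rho-k)$. The bookkeeping you worry about (monotonicity from $w_0$ to $w_{j-1}$, condition~3 once at step $j$, monotonicity from $w_j$ to $w_{|D|}$) is exactly right, and you correctly keep $\rho$ throughout where the paper's own write-up slips into using the specific value $18$. The one weak spot is your justification of strictness: the claim that ``$k>0$ forces at least one of the intermediate inequalities to be strict'' does not follow, and the parenthetical about $D$ not being a perfectly efficient dominating set is an appeal to a ``typical situation'' rather than a proof. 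The strictness you need is already supplied by condition~1: since $w_j < w_{j-1}$ holds pointwise, whenever $|D|\ge 2$ at least one of the two monotonicity steps in your chain for $d_j$ is a strict decrease, giving $w_{|D|}(d_j,V(G)) < \rho - k$ and hence the strict final inequality. (In the degenerate case $|D|=1$ only the non-strict bound is obtained; the paper's proof has the same blemish.)
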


\begin{proof}
Let $\{w_j\}_{j=0}^{|D|}$ be such a sequence of weight functions for the exponential dominating set $D$. Conditions $1)$ and $3)$ imply that $ k < w_0(d_j,V(G)) - w_{|D|}(d_j,V(G))$ for all $d_j \in D$. Therefore, $$k|D| < \displaystyle\sum_{j=1}^{|D|} [w_0(d_j,V(G)) - w_{|D|}(d_j,V(G))].$$ Since condition $2)$ gives $1 \le w_{|D|}(D,v)$ for all $v \in V(G)$, then 
$$|V(G)| \le \displaystyle\sum_{j=1}^{|D|} w_{|D|}(d_j,V(G)).$$ Combining these inequalities gives,
\begin{align*}
k|D| + |V(G)|  & <  \sum_{j=1}^{|D|} w_{0}(d_j,V(G))\\
&\le \sum_{j=1}^{|D|} 18 = 18 |D|.
\end{align*}
This implies that $$\frac{1}{18 - k} <  \dfrac{|D|}{|V(G)|}.$$  
\end{proof}

We now construct a recursive set of weight functions that satisfy the conditions of Lemma \ref{bound} for some $k.$ Let $d_j \in D$ and $w_{j-1}$ be a weight function such that $(D,w_{j-1})$ dominates $G$. Let $G_d$ be the $r \times r$ grid $G_{d_j}$ and $I = I_{d_j}$. Recall that $x_i = w(S_i,v_i).$ Let $A$ be the $r \times r$ matrix such that $[A]_{ij} = \left(\frac{1}{2} \right) ^ {\dist(v_i,v_j)}.$ Let $\vec{x} = [x_1, x_2, \ldots, x_{r^2}]^{\rm \bf T}$ and $\vec{w} = [w(D,v_1), w(D,v_2), \ldots, w(D,v_{r^2})]^{\rm \bf T}.$ Thus, $\vec{w} \le A \vec{x}.$ In fact, if $w_0 < w$, then $\vec{w_0} < A {\bf x}.$

Let $c$ be the real-valued vector such that 
$$c^{\bf T} \vec{x} = \displaystyle\sum_{v_i \in I} w_{j-1}(D,v_i).$$ 
The objective function in the linear program will be $c^{\bf T} {\bf x}$, where ${\bf x}$ is a vector of $r^2$ variables. Since $(D,w_{j-1})$ dominates $G$, ${\bf 1} \le \vec{w}_{j-1}$, where ${\bf 1}$ is the all 1s vector. Therefore, ${\bf 1} \le A\vec{x}$; hence, ${\bf 1} \le A {\bf x}$ is a constraint. Let $b$ be the real-valued vector whose $i$th entry is $1+\left(\frac{1}{2} \right) ^ {\dist(v_i,d_j)} + \epsilon_j$ if $v_i \in I$ and $18$ otherwise. The constraint $A{\bf x} \le b$ will be added to ensure that for each vertex in $I$ the weight assigned from $d_j$ can be decreased by the appropriate amount. Consider the following linear program:
\begin{equation*}
\begin{array}{rrcll}
\min &{c}^{\bf T} {\bf x} &&\\
{\rm s.t.} & A{\bf x} &\ge& {\bf 1}\\
& A{ \bf x} &\le& b\\
&{\bf x}& \ge & {\bf 0}. &\\
\end{array}
\end{equation*}

Define ${\bf x}^*$ to be an optimal solution to the linear program and ${\bf x}_{\min}$ to be the value attained. Obviously, $|I| + \epsilon < {\bf x}_{\min}$, so $0 < k = {\bf x}_{\min} - \epsilon - |I|$. For each $i$ with $v_i \in I$, let $$y_i = \displaystyle\sum_{s=1}^{r^2} {\bf x}_i \left(\frac{1}{2} \right) ^ {\dist(v_i,v_s)} - \epsilon_i -1.$$ Thus, $0 \le y_i \le \left(\frac{1}{2} \right) ^ {\dist(v_i,d_j)}$ and $\displaystyle\sum_{v_i \in I} y_i = k.$ 
\begin{remark}{\rm
Note that the weights function $\{w_j\}_{j=0}^{|D|}$ satisfy conditions 1), 2), and 3) of Lemma \ref{bound}. Clearly $w_j < w_{j-1}$, so $1)$ is satisfied. For each $v \in V(G) \setminus I$,  $1 \le w_{j-1}(D,v) = w_j(D,v)$. For each $v_i \in I$, $w_j(D,v) = w_{j-1}(D,v) - k = 1 + \epsilon_i$. This implies $(D, w_j)$ dominates $G$ so $2)$ is satisfied. Lastly, $w_j(d_j,V(G)) = w_{j-1}(d_j,V(G)) - k$, so $3)$ is satisfied.
}
\end{remark}

\end{subsection}

\section{Main Results} \label{sec:res}
In this section, we use Lemma \ref{bound}  and the weight functions $\{w_j\}_{j=0}^{|D|}$ constructed in Section \ref{sec:lp} to attain a lower bound for the exponential domination number of $C_m \times C_n$.

\begin{theorem}\label{main}
For all $m,n \ge 13$, $$\frac{1}{13.7619 +\epsilon} \le \frac{\gamma_e^*(C_m\times C_n)}{mn},$$ where $\epsilon\rightarrow 0 \text{ as } m,n \rightarrow \infty.$ Moreover, $\epsilon = 0$ when $m$ and $n$ are both odd.
\end{theorem}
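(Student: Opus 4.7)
The plan is to apply Lemma \ref{bound} with $\rho = 18$, which is valid because the introduction shows $w(v, V(G)) \le 18$ for every vertex of $C_m \times C_n$. The target is to exhibit a sequence $\{w_j\}_{j=0}^{|D|}$ meeting conditions 1)--3) with $k \ge 4.2381 - \epsilon$; such a $k$ forces $\rho - k \le 13.7619 + \epsilon$, and Lemma \ref{bound} then delivers
\[
\frac{1}{13.7619 + \epsilon} \le \frac{\gamma_e^*(C_m \times C_n)}{mn}.
\]

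To produce $k$, I would instantiate the framework of Section \ref{sec:lp} with the odd value $r = 13$, which is consistent with the hypothesis $m, n \ge 13$. For each $d_j \in D$, one builds the $13 \times 13$ grid $G_{d_j}$ centered at $d_j$, the Voronoi-type partition $S_1, \ldots, S_{r^2}$ with residual set $\Gamma$, and the linear program
\[
\min\ c^{\mathbf{T}} \mathbf{x} \quad \text{s.t.} \quad \mathbf{1} \le A\mathbf{x} \le b,\quad \mathbf{x} \ge \mathbf{0}.
\]
By construction $k = \mathbf{x}_{\min} - |I| - \epsilon$, so the theorem reduces to the LP bound $\mathbf{x}_{\min} \ge |I| + 4.2381$ (with $|I| = 121$ here).

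This LP bound is the main obstacle. Because $r$ is fixed, the LP has constant size ($169$ variables and $2r^2$ constraints) and the matrix $A$ is highly symmetric, with entries equal to powers of $1/2$ in Manhattan grid distances. My strategy is to exhibit an explicit feasible dual solution whose dual objective equals $|I| + 4.2381$; by weak LP duality this certifies the required primal bound. Constructing such a dual certificate requires exploiting the dihedral symmetry of the $13 \times 13$ block and carefully accounting for the geometric coefficients, and it is the analytical crux of the proof. A symbolic or computer-aided solution of the LP is also feasible, but a dual certificate provides the cleanest rigorous justification.

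Once the LP bound is secured, the recursive construction at the end of Section \ref{sec:lp}---decreasing $w_{j-1}(d_j, v_i)$ by the LP-optimum slack $y_i$ at each interior $v_i \in I$---produces the sequence $\{w_j\}$, and the remark there verifies conditions 1)--3) of Lemma \ref{bound}. Combining this with the decay estimate $\epsilon \le r^2(m + n - 1)(1/2)^{\dist(\Gamma, V(G_v)) - 1}$ from Section \ref{sec:lp} gives $\epsilon \to 0$ as $m, n \to \infty$ and hence the first assertion. For the sharpening that $\epsilon = 0$ whenever $m, n$ are both odd, I would show $\Gamma = \varnothing$ in that case; then $\epsilon_j = w(\Gamma, v_j) = 0$ for every $j$. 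The key observation is that torus distances factor as $d_m(a, i) + d_n(b, j)$, so a tie between two grid vertices for closest to $w = (a,b)$ requires a tie either in $d_m$ among grid rows or in $d_n$ among grid columns at the corresponding minimum. A direct check shows that the only candidate tie in $C_m$ between two grid rows (at the minimum) is the antipodal tie at $a = (m + r - 1)/2$, which is non-integer because $m + r - 1 = m + 12$ is odd; the same argument applies in $C_n$, so $\Gamma = \varnothing$ and $\epsilon \equiv 0$.
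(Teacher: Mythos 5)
Your proposal follows essentially the same route as the paper: instantiate the Section \ref{sec:lp} framework with $r=13$, solve the resulting linear program to get $\mathbf{x}_{\min}=125.2381\ldots$, hence $k=4.2381\ldots-\epsilon$, and apply Lemma \ref{bound} with $\rho=18$. The paper simply quotes the numerical LP optimum where you propose a dual certificate, and it omits the justification that $\Gamma=\varnothing$ for $m,n$ odd (which your antipodal-tie argument correctly supplies), but these are refinements of the same argument rather than a different one.
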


\begin{proof}
Let $D$ be a minimum exponential dominating set. For each $v \in D$, let $G_v$ be the $13 \times 13$ grid centered at $v$. Recall that $w(v,V(G)) \le 18$ for all $v \in D$. The solution to the corresponding linear program is $x_{min} = 125.2381080608$. Therefore, it follows that $k = 125.2381080608 - \epsilon - 121 = 4.2381080608$, so $\frac{mn}{13.7618919392 +\epsilon} \le \gamma_e^*(C_m\times C_n)$ by Lemma \ref{bound}. 
\end{proof}

The linear program created in Section \ref{sec:lp} can be constructed in the form of a mixed integer linear program by adding the constraints ${\bf x}_i = 0$ or 2, when $v_i \in I_v$. Then the attained $k$ is $10.94 + \epsilon_v$ by choosing a $9 \times 9$ grid as $G_v$. However, the weight function can only be adjusted at a vertex $v \in D$, such that no vertices in $D \cap I_v$ have been adjusted. Rather than using the linear program for all the vertices in $D$, we will use it for the vertices in $D$ that are relatively close together and use the mixed integer linear program for those vertices in $D$ that are not close to the other vertices of $D$.

\begin{theorem} \label{genbound}
Let $D$ be an exponential dominating set of $C_m \times C_n$ and $\alpha |D|$ be the number of vertices in $D$ that are not within a $7 \times 7$ grid of any other vertex in $D$. Then $$\frac{1}{13.7619 - 2.8218 \alpha - \epsilon} \le \frac{\gamma(C_m \times C_n)}{mn},$$ where $\epsilon \to 0$ as $m,n \to \infty$.
\end{theorem}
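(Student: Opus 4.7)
The plan is to mirror the proof of Theorem \ref{main} but exploit the tighter MILP from the paragraph preceding the statement on the isolated vertices of $D$, leaving the remaining vertices to be handled by the basic LP on a $13 \times 13$ grid. First I would partition $D = D_1 \sqcup D_2$, where $D_2$ consists of the $\alpha|D|$ vertices of $D$ that have no other $D$-vertex in their $7 \times 7$ centered grid, and $D_1 = D \setminus D_2$. For each $v \in D_1$ I run the LP of Section \ref{sec:lp} on the $13 \times 13$ grid $G_v$, which yields per-vertex savings $k_1 = 4.2381$ exactly as in Theorem \ref{main}. For each $v \in D_2$ I instead run the MILP on the $9 \times 9$ grid $G_v$; because $v$ is isolated, its $7 \times 7$ interior $I_v$ satisfies $D \cap I_v = \{v\}$, so the integrality constraints $\mathbf{x}_i \in \{0,2\}$ for $v_i \in I_v$ faithfully encode $D \cap I_v$ and adjusting at $v$ cannot conflict with any other $D$-vertex's adjustment. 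The MILP yields per-vertex savings $k_2$, where the value $10.94$ quoted in the paragraph before the theorem corresponds to $k_2 - k_1 = 2.8218$ additional savings beyond the basic LP.

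Next I would construct the weight sequence $\{w_j\}_{j=0}^{|D|}$ by processing the $D_2$-vertices first (each MILP step is clean by isolation) and then the $D_1$-vertices in any order. At every step the three conditions of Lemma \ref{bound} hold exactly as verified in the Remark after the LP construction in Section \ref{sec:lp}: monotonicity $w_j < w_{j-1}$ and the per-step savings bound come from the optimization itself, while domination $w_j(D, v) \ge 1$ is enforced by the constraint $A\mathbf{x} \ge \mathbf{1}$.

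Finally I would apply a per-vertex generalization of Lemma \ref{bound} in which each $d_j$ is allowed its own savings $k_j$; the proof is identical, with $k|D|$ replaced by $\sum_j k_j$ throughout. Summing gives $\sum_j k_j = |D|\bigl(k_1 + \alpha(k_2 - k_1)\bigr) = |D|(4.2381 + 2.8218\alpha)$, and the lemma's conclusion becomes
\[
\frac{1}{13.7619 - 2.8218\alpha - \epsilon} \;\le\; \frac{|D|}{|V(G)|},
\]
which gives the claim once $D$ is taken to be a minimum exponential dominating set.

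The main obstacle is handling the interaction between adjustments at distinct $D$-vertices whose interiors $I$ overlap: when the $11 \times 11$ interior of some $D_1$-vertex meets an earlier $I_{v'}$ for a $v' \in D_2$, the LP must be set up against the already-reduced weight function $w_{j-1}$ rather than the original $w$, which in turn requires updating the right-hand side $b$ of the LP and re-checking $A\mathbf{x} \le b$. The isolation hypothesis ensures every $v' \in D_2$ is at distance at least $4$ from any other $D$-vertex, keeping this overlap well controlled, so the step-by-step analysis mirrors the one already carried out in Section \ref{sec:lp}.
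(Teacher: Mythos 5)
Your proposal is correct and follows essentially the same route as the paper: split $D$ into the $\alpha|D|$ isolated vertices (handled by the tighter $9\times 9$ program, adjusted weight $10.94+\epsilon_2$) and the rest (handled by the $13\times 13$ LP, adjusted weight $13.7619+\epsilon_1$), then sum per-vertex savings to get $mn \le |D|(13.7619 - 2.8218\alpha + \epsilon)$. The only cosmetic difference is that the paper replaces the MILP by its LP relaxation with the box constraint $\mathbf{x} \le b'$ (forcing $x_i = 0$ on the interior, which the isolation hypothesis justifies), and your observation about sequencing the adjustments so that overlapping interiors are handled against the already-reduced weight function is exactly the point the paper addresses, more tersely, in the remark preceding the theorem.
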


\begin{proof}
Let $D'$ be the set of vertices that are not within a $7 \times 7$ grid of any other vertex in $D$; so $|D'| = \alpha|D|$. Choose $r=9$ in $G_v$ and let $b'$  be the real-valued vector whose $i$th entry is $0$ if $v_i \in I_v$ and $4$ otherwise. By taking geometric sums, it is easy to see that $x_i \le 4$, for all $i$.

The linear program 
\begin{equation*}
\begin{array}{rrcl}
\min &{c}^{\bf T} {\bf x} &&\\
{\rm s.t.} & A{\bf x} &\ge& {\bf 1}\\
& A{ \bf x} &\le& b\\
&{\bf x}& \le &b'   \\
&{\bf x}& \ge & {\bf 0}. \\
\end{array}
\end{equation*}
will attain a minimum of 56.06. So each vertex in $D'$ can be adjusted by $56.06 -\epsilon_2 - 49 =7.06 - \epsilon_2$ to $10.94 + \epsilon_2$, for some $\epsilon_2 \ge 0$. As before, the vertices of $D \setminus D'$ can be adjusted to $13.7618919392 + \epsilon_1$, for some $\epsilon_1 \ge 0$. So $mn \le (1- \alpha)|D|(13.7619 + \epsilon_1) + \alpha|D|(10.94 + \epsilon_2)$, which implies $mn \le |D| (13.7619 - 2.8218\alpha + \epsilon)$.
\end{proof}

Corollary \ref{alphabound} is a direct result of combining Theorems \ref{13} and \ref{genbound}.

\begin{corollary} \label{alphabound}
Let $D$ be an exponential dominating set of $C_m \times C_n$. For $m$ and $n$ large enough, the number of vertices in $D$ that are not within a $7 \times 7$ grid of any other vertex in $D$ is at most $.27 |D|.$
\end{corollary}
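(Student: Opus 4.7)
The plan is to chain the asymptotic upper bound from Theorems~\ref{13} and~\ref{lim13} with the $\alpha$-dependent lower bound from Theorem~\ref{genbound}, treating $\alpha$ as the unknown and solving. For the result to be nontrivial we take $D$ to be a minimum exponential dominating set (or within $o(mn)$ of minimum), so that by Theorem~\ref{lim13}, for every $\delta>0$ and all sufficiently large $m,n$ we have $|D|/(mn)\le 1/13+\delta$. Let $\alpha|D|$ denote the number of vertices of $D$ that are not within a $7\times 7$ grid of any other vertex of $D$.

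Theorem~\ref{genbound} then supplies
$$\frac{1}{13.7619 - 2.8218\alpha - \epsilon} \le \frac{|D|}{mn} \le \frac{1}{13} + \delta,$$
where $\epsilon \to 0$ as $m,n \to \infty$. Inverting both sides and rearranging gives
$$\alpha \le \frac{0.7619}{2.8218} + o(1),$$
where the $o(1)$ term collects the contributions from both $\epsilon$ and $\delta$. Using the unrounded constants $13.7618919392$ and $2.8218919392$ arising from the LP optimum in Theorem~\ref{main} (so that the isolated-vertex adjustment yields $10.94$), one checks that $0.7619/2.8218 < 0.27$, so for $m$ and $n$ sufficiently large the vanishing error terms can be absorbed and we conclude $\alpha \le 0.27$.

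I do not expect any substantive obstacle beyond this arithmetic: the corollary is purely a consequence of combining the two bounds. The only genuine care required is verifying that both error terms (the $o(1)$ from Theorem~\ref{lim13} and the $\epsilon$ from Theorem~\ref{genbound}) are strictly smaller than the positive slack $0.27 - 0.7619/2.8218$ for sufficiently large $m,n$; this is automatic because both error terms tend to zero while the slack is a fixed positive constant.
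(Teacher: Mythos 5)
Your proposal is correct and is exactly the argument the paper intends (the paper offers no written proof, stating only that the corollary follows by combining Theorems~\ref{13} and~\ref{genbound}): chain the $1/13$ upper bound for a minimum dominating set with the $\alpha$-dependent lower bound and solve for $\alpha$. Your attention to the rounding is warranted, since with the rounded constants $0.7619/2.8218 \approx 0.270005 > 0.27$ while the unrounded ratio $0.7618919392/2.8218919392 \approx 0.269993$ sits just below $0.27$, so the slack absorbing the $o(1)$ terms, though positive, is only about $7\times 10^{-6}$.
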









\end{document}